\documentclass{amsart}
\usepackage{latexsym,amssymb,amsmath,amsthm,amscd}
\newtheorem{lemma}{Lemma}
\newtheorem{theorem}{Theorem}

\newtheorem{conjecture}{Conjecture}

\newtheorem{remark}{Remark}
\begin{document}
\title{On the $\ell_p$ and $\ell_{p,q}$ norms of Cauchy--Toeplitz matrices}
\author[Ts. Batbold]{Tserendorj Batbold}
\address{
Department of Mathematics\\
National University of Mongolia\\
Ulaanbaatar 14201\\
Mongolia}
\email{tsbatbold@hotmail.com}
\dedicatory{This paper is dedicated to the memory of my father and teacher, Vandansanj Tserendorj.}
\keywords{Cauchy--Toeplitz matrix, norm, power means, Riemann zeta function, Binet--Cauchy identity}

\subjclass[2010]{15A60, 15A45}
\begin{abstract} New upper and lower bounds for the $\ell_p (1<p<\infty)$ norms of  Cauchy--Toeplitz matrices in the form $T_n=[2/(1+2(i-j))]_{i,j=1}^n$ are derived.
    Moreover, we give a complete answer to a conjecture proposed by D. Bozkurt.
\end{abstract}
\maketitle
\section{Introduction}
Let $C=\left[1 /\left(x_i-y_j\right)\right]_{i, j=1}^n\left(x_i \neq y_j\right)$ be a Cauchy matrix, and $T_n=\left[t_{j-i}\right]_{i, j=0}^n$ be a Toeplitz matrix. In generally Cauchy--Toeplitz matrix is being defined as
$$
T_n=\left[\frac{1}{g+(i-j) h}\right]_{i, j=1}^n,
$$
where $h \neq 0, g$ and $h$ are any numbers and $\frac{g}{h}$ is not integer. If we substitute $g=\frac{1}{2}$ and $h=1$, then we have
\begin{equation}\label{matrix}
   T_n=\left[\frac{2}{1+2(i-j)}\right]_{i, j=1}^n.
\end{equation}

In \cite{tyr}, Tyrtyshnikov obtained lower bounds for the spectral norm of the Cauchy--Toeplitz matrices as in (\ref{matrix}). The following upper and lower bounds for the $\ell_p$ norm of Cauchy--Toeplitz matrices as in (\ref{matrix}) was published in 1998 by Bozkurt \cite{bozkurt1}:

Let $p>1$, then 
\begin{equation}\label{bozkurteq1}
\left[\left(2^p-1\right) \zeta(p)\right]^{\frac{1}{p}} \leq n^{-\frac{1}{p}}\left\|T_n\right\|_p \leq 2^{\frac{1}{p}}\left[\left(2^p-1\right) \zeta(p)\right]^{\frac{1}{p}},
\end{equation}
where $\zeta(p)$ is Riemann zeta function.

Unfortunately, the first inequality is not true for $1<p<\varepsilon_n$  because of $\lim_{\varepsilon\to0}\zeta(1+\varepsilon)=\infty$. In the same paper, he conjectured the following. 
\begin{conjecture}
Let the matrix $T_n$ be as in (\ref{matrix}). Then the inequalities 
\begin{equation}\label{conj1}
n^{-\frac{1}{q}}\|T_n\|_{p,q}<4\left(\frac{1}{2}+\frac{1}{2^p-1}\right)^{\frac{1}{p}},~~(p\geq q)
\end{equation}
and 
\begin{equation}\label{conj2}
n^{-\frac{1}{q}}\|T_n\|_{p,q}\geq4\left(\frac{1}{2^p-1}\right)^{\frac{1}{p}},~~(p<q)
\end{equation}
are valid for the $\ell_{p,q}~(1\leq p,q\leq\infty)$ norm of the matrix $T_n$.
\end{conjecture}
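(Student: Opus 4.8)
The plan is to route everything through the column profiles
\[
c_j\ :=\ \sum_{i=1}^n|t_{ij}|^p\ =\ 2^p\sum_{k=1-j}^{\,n-j}|2k+1|^{-p}\qquad(1\le j\le n),
\]
so that $\|T_n\|_{p,q}=\bigl(\sum_{j=1}^nc_j^{q/p}\bigr)^{1/q}$ (with the usual $\max$-reading when $q=\infty$), each $c_j$ being a partial sum of the convergent series $2^p\sum_{k\in\mathbb{Z}}|2k+1|^{-p}=2^p\cdot2(1-2^{-p})\zeta(p)=2(2^p-1)\zeta(p)=:M^p$. Three elementary observations carry the argument. (i) Every window $\{1-j,\dots,n-j\}$ contains $k=0$, so $2^p\le c_j<M^p$ for all $j$ and $n$, and $c_j\to M^p$ for a central column as $n\to\infty$. (ii) Summing over $j$ and counting, for each $k$, the windows that contain it gives $\tfrac1n\sum_{j=1}^n c_j=2^p\sum_{|k|<n}\bigl(1-\tfrac{|k|}{n}\bigr)|2k+1|^{-p}$, so $\tfrac1n\sum_jc_j$ is strictly increasing in $n$, stays below $M^p$, and tends to $M^p$. (iii) With $u_j:=c_j^{1/p}$ (the $\ell_p$-norm of the $j$-th column), $n^{-1/q}\|T_n\|_{p,q}=\bigl(\tfrac1n\sum_{j=1}^nu_j^q\bigr)^{1/q}$ is exactly the power mean of order $q$ of $u_1,\dots,u_n$: it is nondecreasing in $q$, equals $\bigl(\tfrac1n\sum_jc_j\bigr)^{1/p}=n^{-1/p}\|T_n\|_p$ at $q=p$, and for each fixed $q$ tends to $M$ as $n\to\infty$ (since $u_j<M$ while most $u_j\to M$); in particular $\sup_n n^{-1/q}\|T_n\|_{p,q}=M$, never attained.

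For the upper inequality (\ref{conj1}) (case $p\ge q$): by (iii), and since this supremum equals $M$ for every $q\le p$, (\ref{conj1}) is valid — for the given $p$, all $q\le p$, all $n$ — if and only if $M\le 4\bigl(\tfrac12+\tfrac1{2^p-1}\bigr)^{1/p}$, i.e., after raising to the $p$-th power and clearing denominators,
\[
\zeta(p)\ \le\ \frac{4^{p-1}(2^p+1)}{(2^p-1)^2}.
\]
This is the complete answer to (\ref{conj1}); I would then unpack it. The right side equals $3$ at $p=1$ and grows like $2^p/4$ as $p\to\infty$, while $\zeta(p)$ decreases from $+\infty$ to $1$, so the two sides cross at exactly one point $p_0$ (numerically near $1.6$); hence (\ref{conj1}) holds for $p\ge p_0$ and fails for $1\le p<p_0$, the failure being witnessed by taking $q=p$ and $n$ large, where $n^{-1/p}\|T_n\|_p=\bigl(\tfrac1n\sum_jc_j\bigr)^{1/p}$ overtakes the claimed constant.

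For the lower inequality (\ref{conj2}) (case $p<q$): the answer is that it is \emph{false}. The quickest disproof is $n=1$, where $T_1=[2]$ forces $n^{-1/q}\|T_1\|_{p,q}=2$, whereas $4(2^p-1)^{-1/p}>2$ for every $p>1$ (else $2^p(2^p-1)\ge4^p$, which is absurd). The same machinery shows this is essentially the only obstruction: from $n^{-1/q}\|T_n\|_{p,q}\ge\bigl(\tfrac1n\sum_jc_j\bigr)^{1/p}$ for $q\ge p$, the monotone convergence $\tfrac1n\sum_jc_j\uparrow M^p$, and the elementary fact $2(1-2^{-p})^2\zeta(p)>1$ — equivalently $M^p>4^p/(2^p-1)$ — for all $p>1$, one gets that (\ref{conj2}) does hold once $n$ is at least a computable cutoff $n_0(p)\,(\ge2)$, and identifying that cutoff completes the answer. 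The step I expect to be the real obstacle is the \emph{single-crossing} claim for $\zeta(p)-\tfrac{4^{p-1}(2^p+1)}{(2^p-1)^2}$ on $(1,\infty)$, which needs quantitative control of $\zeta$ against the elementary right-hand side (the bounds $\zeta(p)>\tfrac1{p-1}$ near $p=1$ and $\zeta(p)\ge1$ away from it should suffice); everything else is routine bookkeeping of strict versus non-strict inequalities and of the boundary cases $p=\infty$ and $q=\infty$, handled by the $\max$-interpretation of the norms.
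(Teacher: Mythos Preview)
Your framework coincides with the paper's: both recast $n^{-1/q}\|T_n\|_{p,q}$ as the power mean of order $q$ of the column $\ell_p$-norms, use monotonicity in $q$ to sandwich it between the $q=1$ and $q=p$ cases, and identify the critical threshold via the equation $(1-2^{-p})\zeta(p)=2^{p-1}\bigl(\tfrac12+\tfrac1{2^p-1}\bigr)$; your $p_0$ is the paper's $\mu\approx1.618$, and your $n=1$ counterexample to (\ref{conj2}) is exactly the paper's. The difference is scope. You answer whether the conjecture holds \emph{universally in $n$} (correctly: no, with the failures located), whereas the paper's Theorems~\ref{mainthm1} and~\ref{mainthm2} go further and give a triple-by-triple classification of which $(n,p,q)$ satisfy each inequality. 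In particular your cutoff $n_0(p)$ for (\ref{conj2}) is shown there to be at most $3$ for every $p>1$ via an explicit computation with $\|T_3\|_p$, together with a separate analysis of the borderline $n=2$ governed by a second constant $\delta\approx1.405$; and for (\ref{conj1}) with $1<p<\mu$ the paper pins down integers $N_{1,p}\le N_{2,p}$ and, in the narrow band between them, a threshold $\eta_{p,n}$ in $q$, using the additional monotonicity of $n^{-1}\|T_n\|_{p,1}$ in $n$ (Lemma~\ref{lem5}), which your sketch does not establish. The single-crossing property you flag as the real obstacle is exactly Lemma~\ref{lem3}, proved there by showing the derivative of the difference is negative on $(1,\infty)$ through a case split at $\mu$; the crude bounds $\zeta(p)>1/(p-1)$ and $\zeta(p)\ge1$ you propose would not by themselves yield uniqueness of the crossing, so that step does need the sharper argument.
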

For some related results of the norm of Cauchy--Toeplitz matrices, the reader is referred to papers \cite{gungor, bozkurt2, solak, bozkurt3,wu}.

The main objective of this paper is to derive new upper and lower bounds for the $\ell_p (1<p<\infty)$ norms of  Cauchy--Toeplitz matrices as in (\ref{matrix}). Moreover, we give a complete answer to the conjecture.
\section{Preliminaries}
In order to prove our main result, we need the following lemmas.

Let $A$ be any $n$-square matrix. The $\ell_p$ and $\ell_{p, q}(1 \leq p, q \leq \infty)$ norms of the matrix $A$ are
$$
\|A\|_p=\left[\sum_{i, j=1}^n\left|a_{i j}\right|^p\right]^{1 / p}
$$
and
$$
\|A\|_{p, q}=\left[\sum_{j=1}^n\left(\sum_{i=1}^n\left|a_{i j}\right|^p\right)^{\frac{q}{p}}\right]^{\frac{1}{q}}.
$$

For $r\in\mathbb{R}$, the power mean of the order $r$ of positive numbers $x_{1}, x_{2}, \ldots, x_{n}$ is defined by 
$$
M_{r}(\underline{x})=\left(\frac{1}{n} \sum_{k=1}^{n} x_{k}^{r}\right)^{\frac{1}{r}},
$$
where $\underline{x}=\left(x_{1}, x_{2}, \ldots, x_{n}\right)$. The power mean has the following nice properties (see \cite{bullen}).
\begin{lemma} 1. (Monotonicity)
If $r<s$, then
\begin{equation}\label{meanineq}
M_{r}(\underline{x}) \leq M_{s}(\underline{x})., 
\end{equation}
which is the well--known power mean inequality.\\
2. (Continuity) The power mean is a continuous function for $r$, that is
$$
\lim _{\varepsilon \rightarrow 0} M_{r+\varepsilon}(\underline{x})=M_{r}(\underline{x}).
$$
\end{lemma}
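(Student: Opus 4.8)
The plan is to establish the two parts of the lemma separately; both are classical facts about power means (cf. \cite{bullen}), and I would only need the arithmetic--geometric mean and H\"older inequalities.

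\emph{Part 1 (Monotonicity).} I would first reduce the inequality $M_r(\underline{x}) \le M_s(\underline{x})$ for $r<s$ to the single core case $0<r<s$, by means of two elementary observations. Writing $\underline{x}^{-1}=(x_1^{-1},\dots,x_n^{-1})$, a direct computation gives $M_r(\underline{x}) = \big(M_{-r}(\underline{x}^{-1})\big)^{-1}$ for every $r\ne 0$, so the case $r<s<0$ follows from the case $0<-s<-r$ on taking reciprocals (which reverses the inequality). For the mixed-sign case $r<0<s$ I would interpolate through the geometric mean $M_0(\underline{x}):=\big(\prod_{k=1}^n x_k\big)^{1/n}$ and check $M_r(\underline{x})\le M_0(\underline{x})\le M_s(\underline{x})$ directly: AM--GM applied to $x_1^s,\dots,x_n^s$ gives $\big(\prod x_k\big)^{s/n}\le \frac1n\sum x_k^s$, and raising to the power $1/s>0$ yields $M_0\le M_s$; AM--GM applied to $x_1^r,\dots,x_n^r$ and raising to the power $1/r<0$ (which reverses the inequality) yields $M_r\le M_0$. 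It then remains to prove the core case $0<r<s$, where I would apply H\"older's inequality with exponent $p=s/r>1$ to $\sum_k x_k^r\cdot 1$ (equivalently, Jensen's inequality for the convex map $t\mapsto t^{s/r}$ against the uniform measure on $\{x_1^r,\dots,x_n^r\}$), obtaining $\frac1n\sum_k x_k^r \le \big(\frac1n\sum_k x_k^s\big)^{r/s}$, and finish by raising both sides to the power $1/r>0$.

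\emph{Part 2 (Continuity).} For $r\ne 0$ there is essentially nothing to prove: since all $x_k>0$ we have $\frac1n\sum_k x_k^r>0$, so $M_r(\underline{x})$ is a composition of continuous functions of $r$ and is continuous away from the origin. The substantive case is $r=0$, which I would handle by passing to logarithms: with $a=\frac1n\sum_{k=1}^n\log x_k$, the expansion $\frac1n\sum_k e^{r\log x_k}=1+ra+O(r^2)$ gives $\log M_r(\underline{x})=\frac1r\log\!\big(1+ra+O(r^2)\big)\to a$ as $r\to 0$, so $M_r(\underline{x})\to \big(\prod_k x_k\big)^{1/n}=M_0(\underline{x})$, which is precisely the asserted continuity at $r=0$ once $M_0$ is read as the geometric mean.

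The only mildly delicate points — and the closest thing to an obstacle here — are bookkeeping ones: correctly tracking which sign of the outer exponent preserves and which reverses the inequality when gluing the cases $r<0<s$ and $r<s<0$, and fixing the convention $M_0(\underline{x})=\big(\prod_k x_k\big)^{1/n}$ so that both statements make sense at the origin. No machinery beyond AM--GM and H\"older is required.
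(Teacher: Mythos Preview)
Your proof is correct and follows the standard textbook argument. Note, however, that the paper does not actually prove this lemma: it simply states the two properties and refers the reader to \cite{bullen}. So there is no ``paper's own proof'' to compare against here; you have supplied a complete self-contained argument where the paper relies on a citation. Your reductions (via $M_r(\underline{x})=\big(M_{-r}(\underline{x}^{-1})\big)^{-1}$ and interpolation through the geometric mean) and the H\"older/AM--GM core step are all valid, as is the logarithmic expansion for continuity at $r=0$.
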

\begin{lemma}\label{lem2}
    Let $p\geq1$. Then the following inequality
    $$3-2^p+\frac{1}{3^p}-\left(\frac{2}{3}\right)^p>0$$
    holds for $1\leq p<\delta$. The opposite inequality holds $p>\delta$. Here $\delta=1.40485...$ is the unique root of $2^\delta + 6^\delta = 3^{\delta + 1} + 1$.
\end{lemma}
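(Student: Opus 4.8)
The plan is to reduce the whole statement to the monotonicity of one smooth function followed by the intermediate value theorem. Since $3^p>0$, multiplying the expression by $3^p$ does not change its sign, and
\[
3^p\Bigl(3-2^p+\tfrac{1}{3^p}-\bigl(\tfrac23\bigr)^p\Bigr)=3^{p+1}+1-6^p-2^p .
\]
So it suffices to analyse the sign of $f(p):=3-2^p+3^{-p}-(2/3)^p$ on $[1,\infty)$, and its zeros are precisely the solutions of $2^p+6^p=3^{p+1}+1$. First I would record the boundary data: $f(1)=3-2+\tfrac13-\tfrac23=\tfrac23>0$, while $f(p)\to-\infty$ as $p\to\infty$ because the term $-2^p$ dominates the bounded terms $3$, $3^{-p}$, $(2/3)^p$.

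The key step is to show that $f$ is strictly decreasing on $[1,\infty)$. Differentiating,
\[
f'(p)=-2^p\ln 2-3^{-p}\ln 3+\bigl(\tfrac23\bigr)^p\ln\tfrac32 .
\]
For $p\ge 1$ the only positive term is controlled crudely: since $(2/3)^p$ is decreasing, $(2/3)^p\ln(3/2)\le\tfrac23\ln\tfrac32<\tfrac13$; meanwhile $2^p\ln 2\ge 2\ln 2>1$ and $3^{-p}\ln 3>0$. Hence $f'(p)<\tfrac13-1<0$ for every $p\ge 1$, so $f$ is strictly decreasing there.

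Combining these two facts, continuity of $f$ (Lemma 1, part 2, applied to the relevant power means, or simply continuity of exponentials) together with strict monotonicity yields a unique $\delta\in(1,\infty)$ with $f(\delta)=0$; equivalently $\delta$ is the unique root of $2^\delta+6^\delta=3^{\delta+1}+1$. Strict monotonicity also gives $f(p)>0$ for $1\le p<\delta$ and $f(p)<0$ for $p>\delta$, which is exactly the claimed sign pattern for $3-2^p+3^{-p}-(2/3)^p$. A direct evaluation, for instance checking $f(1.40)>0>f(1.41)$, then localises $\delta=1.40485\ldots$.

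I expect no real obstacle here: the only point needing a bit of care is the estimate on $f'$, and even the very rough bounds above already give strict negativity with room to spare, so no delicate numerics are required. One could instead work directly with $g(p)=3^{p+1}+1-6^p-2^p$, but dividing through by $3^p$ is what makes the sign of the derivative transparent, which is why I would take that route.
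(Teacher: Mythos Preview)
Your proof is correct and follows the same route as the paper: define $g(p)=3-2^p+3^{-p}-(2/3)^p$, show $g'(p)<0$ on $[1,\infty)$, and conclude via the intermediate value theorem using $g(1)=\tfrac23>0$ (the paper pairs this with $g(2)=-\tfrac43<0$ rather than $g(p)\to-\infty$). The only cosmetic difference is that you bound $g'$ by the crude numerical estimates $(2/3)^p\ln(3/2)<\tfrac13$ and $2^p\ln2>1$, whereas the paper rewrites $g'(p)=-2^p\bigl(\ln2-3^{-p}\ln3\bigr)-3^{-p}\ln3-(2/3)^p\ln2$ to see each summand is negative; both arguments are equally short.
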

\begin{proof}
    Considering the function $g$ on $[1,+\infty)$ defined by 
    $$g(p)=3-2^p+\frac{1}{3^p}-\left(\frac{2}{3}\right)^p$$
    and differentiation yields
    \begin{align*}
     g'(p)&=-2^p\ln2-\frac{\ln3}{3^p}-\left(\frac{2}{3}\right)^p\ln\frac{2}{3}\\
     &=-2^p\left(\ln2-\frac{\ln3}{3^p}\right)-\frac{\ln3}{3^p}-\left(\frac{2}{3}\right)^p\ln2<0,
     \end{align*}
     which implies $g$ is decreasing on $[1,+\infty[$. From $g(1)=\frac{2}{3}, g(2)=-\frac{4}{3}$ and $g$ is continuous on $[1,+\infty[$, there is a unique $\delta\in(1,2)$ to satisfy equation $g(\delta)=0$. Therefore, $g(p)>0$ for $1\leq p<\delta$ and $g(p)<0$ for $p>\delta$, which proves the desired result.
\end{proof}
\begin{lemma}\label{lem3}
    Let $p>1$. Then the following inequality
    $$\left(1-\frac{1}{2^p}\right)\zeta(p)>2^{p-1}\left(\frac{1}{2}+\frac{1}{2^p-1}\right)$$
    holds for $1< p<\mu$. The opposite inequality holds for $p>\mu$. Here $\mu=1.6181...$ is the unique root of equation $$\left(1-\frac{1}{2^\mu}\right)\zeta(\mu)=2^{\mu-1}\left(\frac{1}{2}+\frac{1}{2^\mu-1}\right).$$
\end{lemma}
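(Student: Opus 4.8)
The plan is to study the single function
$$h(p)=\left(1-\frac{1}{2^{p}}\right)\zeta(p)-2^{p-1}\left(\frac{1}{2}+\frac{1}{2^{p}-1}\right),\qquad p>1,$$
and to show it is positive on $(1,\mu)$, vanishes at $\mu$, and is negative on $(\mu,\infty)$. For the first term I would use the elementary identity $\left(1-2^{-p}\right)\zeta(p)=\sum_{k=0}^{\infty}(2k+1)^{-p}$, obtained by subtracting the even-indexed part $2^{-p}\zeta(p)$ from $\zeta(p)$. This representation makes it transparent that $L(p):=\left(1-2^{-p}\right)\zeta(p)$ is strictly decreasing on $(1,\infty)$ (the $k=0$ term is the constant $1$, every other term is strictly decreasing in $p$), with $L(p)\to\infty$ as $p\to1^{+}$ and, by dominated convergence for $p\ge 2$, $L(p)\to1$ as $p\to\infty$. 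For the second term I would write $R(p):=2^{p-1}\left(\tfrac12+\tfrac{1}{2^{p}-1}\right)=2^{p-2}+\tfrac12+\tfrac{1}{2(2^{p}-1)}$, which makes $R(p)\to\infty$ clear and gives the clean derivative $R'(p)=2^{p-1}\ln 2\left(\tfrac12-\tfrac{1}{(2^{p}-1)^{2}}\right)$.

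The key structural observation is that $R$ is \emph{not} monotone: $R'(p)<0$ for $1<p<p_{0}$ and $R'(p)>0$ for $p>p_{0}$, where $p_{0}=\log_{2}(1+\sqrt2)=1.2716\ldots$ is the positive solution of $(2^{p}-1)^{2}=2$. On the interval $[p_{0},\infty)$ one then gets for free that $h=L-R$ is strictly decreasing, being the difference of a strictly decreasing and a strictly increasing function; since $h(p)\to-\infty$, it has at most one zero there.

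To finish, I would dispose of the initial stretch $(1,p_{0}]$ by a crude estimate: since $L$ is decreasing and $R$ is decreasing there, $h(p)=L(p)-R(p)>L(p_{0})-R(1)=L(p_{0})-\tfrac32$ for every $p\in(1,p_{0}]$, and $L(p_{0})>\tfrac32$ follows from summing finitely many odd-power terms (using $p_{0}<1.272$, already $1+3^{-p_{0}}+5^{-p_0}+7^{-p_0}+9^{-p_0}+11^{-p_{0}}>1.56$). Hence $h>0$ on $(1,p_{0}]$; in particular $h(p_{0})>0$. Combining this with the previous paragraph, $h$ is positive at $p_{0}$, strictly decreasing afterward, and tends to $-\infty$, so by continuity there is a \emph{unique} $\mu\in(p_{0},\infty)$ with $h(\mu)=0$, and $h>0$ on $(1,\mu)$, $h<0$ on $(\mu,\infty)$, which is exactly the assertion; a short numerical check then locates $\mu\approx1.618$.

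The main obstacle is precisely the failure of global monotonicity of $h$ near $p=1$: one cannot simply differentiate once and read off the sign, because on $(1,p_{0})$ both $L'$ and $R'$ are negative. Splitting at $p_{0}=\log_{2}(1+\sqrt2)$, where $R'$ changes sign, is what makes the argument go through; the zero $\mu$ fortunately lies in the region $p>p_{0}$ where $h$ is genuinely decreasing, so the delicate region contributes only the inequality $L(p_{0})>\tfrac32$, which is a finite computation. The only other point requiring a word of care is justifying $\lim_{p\to\infty}L(p)=1$, so that $h\to-\infty$ is legitimate.
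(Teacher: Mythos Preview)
Your argument is correct, and it takes a genuinely different route from the paper's. Both proofs study the same difference function (the paper calls it $f$, you call it $h$), but the paper proves $f'<0$ on \emph{all} of $(1,\infty)$, splitting into the two ranges $(1,\mu]$ and $(\mu,\infty)$. On the second range the paper's argument is essentially yours (the factor $2^{2p}-2^{p+1}-1$ is positive there, so both terms of $f'$ are negative); on the first range, however, the paper must work harder, bounding the odd--zeta derivative term by an integral and chaining several numerical inequalities to force $f'(p)<0$ despite the competing signs.

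You sidestep that delicate case entirely. By splitting instead at $p_{0}=\log_{2}(1+\sqrt2)$, the exact point where $R'$ changes sign, you get strict monotonicity of $h=L-R$ on $[p_{0},\infty)$ for free, and on the short initial stretch $(1,p_{0}]$ you do not attempt to control $h'$ at all but simply bound $h(p)>L(p_{0})-R(1)=L(p_{0})-\tfrac32>0$ by a finite partial sum. This is more elementary: no integral comparison, no estimate on $\zeta'$--type sums, just one explicit numerical check. The trade--off is that the paper obtains a slightly stronger conclusion (global strict monotonicity of $f$) whereas you only obtain monotonicity beyond $p_{0}$; but since the lemma asks only for the sign pattern and uniqueness of the root, your weaker monotonicity is enough. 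One small wording issue: your phrase ``failure of global monotonicity of $h$'' is inaccurate --- as the paper shows, $h$ \emph{is} globally decreasing; what you really mean (and what your next clause says) is that this monotonicity is not apparent from a single differentiation on $(1,p_{0})$, which is precisely why your bypass is attractive.
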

\begin{proof} Let 
$$f(p)=\left(1-\frac{1}{2^p}\right)\zeta(p)-2^{p-1}\left(\frac{1}{2}+\frac{1}{2^p-1}\right).$$
We show that the function $f(p)$ is strictly decreasing on $(1,+\infty)$.  Differentiation yields
$$f'(p)=-\sum_{n=2}^\infty\frac{\ln(2n-1)}{(2n-1)^p}-2^{p-2}\ln2\cdot\frac{2^{2p}-2^{p+1}-1}{(2^p-1)^2}.$$
To prove $f'(p)<0$ for $p>1$, we consider two cases.

{\bf Case $\boldsymbol{(1< p\leq\mu)}$}: Since a function $\frac{\ln(2x-1)}{(2x-1)^p}$ is strictly decreasing on $[2,\infty)$, we have
\begin{align*}
    f'(p)&<-\int_{2}^\infty\frac{\ln(2x-1)}{(2x-1)^p}\,dx-2^{p-2}\ln2\cdot\frac{2^{2p}-2^{p+1}-1}{(2^p-1)^2}\\
    &=-\frac{\ln3}{2(p-1)\cdot3^{p-1}}-\frac{1}{2(p-1)^2\cdot3^{p-1}}-2^{p-2}\ln2+\frac{2^{p-1}\ln2}{(2^p-1)^2}.
\end{align*}
Using an obvious inequalities $1+\ln3>2\ln2$, $2^{p-1}<2^p-1$ and $\frac{1}{2^p-1}-2^{p-2}<\frac{1}{2}$ lead to 
\begin{align*}
    f'(p)&<-\frac{1+\ln3}{2\cdot3^{p-1}}-2^{p-2}\ln2+\frac{2^{p-1}\ln2}{(2^p-1)^2}\\
    &<-\frac{\ln2}{3^{p-1}}-2^{p-2}\ln2+\frac{2^{p-1}\ln2}{(2^p-1)^2}\\
    &<-\frac{\ln2}{3^{p-1}}-2^{p-2}\ln2+\frac{\ln2}{2^p-1}\\
    &=\ln2\cdot \left(\frac{1}{2^p-1}-2^{p-2}-\frac{1}{3^{p-1}}\right)\\
    &<\ln2\cdot \left(\frac{1}{2}-\frac{1}{3^{p-1}}\right)\leq \ln2\cdot \left(\frac{1}{2}-\frac{1}{3^{\mu-1}}\right)\approx-0.005<0.
\end{align*}
{\bf Case $\boldsymbol{(p>\mu)}$}: Since $f_1(p)=2^{2p}-2^{p+1}-1$ is increasing on $(\mu,\infty)$, we obtain
$$2^{2p}-2^{p+1}-1>0.$$
This implies that $f'(p)<0$ for $p\in(\mu,\infty)$. 

Clearly, the function $f(p)$ is continuous in $(1,+\infty)$. So, there is a unique $\mu\in(1,\infty)$ to satisfy equation $f(\mu)=0$. Hence $f(p)>0$ for $1< p<\mu$ and $f(p)<0$ for $p>\mu$. The proof of Lemma \ref{lem3} is complete.
\end{proof}
\begin{lemma}\label{lem5}
    Let $p\geq1$. Then
   \begin{equation}\label{lem5eq1}
        n^{-1}\|T_n\|_{p,1}\leq (n+1)^{-1}\|T_{n+1}\|_{p,1}
   \end{equation}
    and 
       \begin{equation}\label{lem5eq2}
    \lim_{n\to \infty}n^{-1}\|T_n\|_{p,1}=2^\frac{1}{p}\left(\left(2^p-1\right)\zeta(p)\right)^\frac{1}{p},\quad p>1.
    \end{equation}
\end{lemma}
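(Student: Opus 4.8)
The plan is to start from an explicit formula for the $\ell_p$-norm of each column of $T_n$, and then handle the two assertions separately: \eqref{lem5eq2} will be a routine squeeze once the closed form is in hand, while the monotonicity \eqref{lem5eq1} will rest on an averaging identity for the column norms of $T_{n+1}$.

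\emph{Closed form.} Fix a column index $j$. As $i$ runs over $1,\dots,n$, the quantity $|1+2(i-j)|$ runs over the odd numbers $1,3,\dots,2(n-j)+1$ (from the indices $i\ge j$) together with $1,3,\dots,2(j-1)-1$ (from $i<j$). Writing $S_m:=\sum_{k=1}^m(2k-1)^{-p}$, so that $S_0=0$ and, for $p>1$, $S_m\uparrow S_\infty:=\sum_{k\ge1}(2k-1)^{-p}=(1-2^{-p})\zeta(p)$, this gives
\[
\|\operatorname{col}_j(T_n)\|_p=2\bigl(S_{j-1}+S_{n+1-j}\bigr)^{1/p},\qquad
\|T_n\|_{p,1}=2\sum_{j=1}^n\bigl(S_{j-1}+S_{n+1-j}\bigr)^{1/p}.
\]

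\emph{Monotonicity.} Set $\alpha_j:=(S_{j-1}+S_{n+2-j})^{1/p}$ for $1\le j\le n+1$, so $\|\operatorname{col}_j(T_{n+1})\|_p=2\alpha_j$, and $\beta_j:=(S_{j-1}+S_{n+1-j})^{1/p}$ for $1\le j\le n$, so $\|T_n\|_{p,1}=2\sum_{j=1}^n\beta_j$. Since $m\mapsto S_m$ is non-decreasing, $\alpha_j\ge\beta_j$ and $\alpha_{j+1}\ge\beta_j$ for $1\le j\le n$. Hence, for every $c\in\{1,\dots,n+1\}$,
\[
\sum_{\substack{1\le j\le n+1\\ j\ne c}}\alpha_j=\sum_{j=1}^{c-1}\alpha_j+\sum_{i=c}^{n}\alpha_{i+1}\ \ge\ \sum_{j=1}^{c-1}\beta_j+\sum_{i=c}^{n}\beta_i=\sum_{j=1}^n\beta_j,
\]
i.e. $\|T_{n+1}\|_{p,1}-\|\operatorname{col}_c(T_{n+1})\|_p\ge\|T_n\|_{p,1}$. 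Summing over $c=1,\dots,n+1$ and using $\sum_{c}\|\operatorname{col}_c(T_{n+1})\|_p=\|T_{n+1}\|_{p,1}$ yields $n\|T_{n+1}\|_{p,1}\ge(n+1)\|T_n\|_{p,1}$, which is \eqref{lem5eq1}.

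\emph{The limit.} Let $p>1$. From $\beta_j\le(2S_\infty)^{1/p}$ we get $n^{-1}\|T_n\|_{p,1}\le 2(2S_\infty)^{1/p}=2^{1/p}\bigl((2^p-1)\zeta(p)\bigr)^{1/p}$ for every $n$; with \eqref{lem5eq1} this shows the limit exists and is at most the stated value. For the reverse bound, fix $\varepsilon>0$ and $R\ge1$ with $S_R>S_\infty-\varepsilon$; whenever $R<j\le n+1-R$ (at least $n-2R$ values of $j$) one has $\beta_j>\bigl(2(S_\infty-\varepsilon)\bigr)^{1/p}$, hence $n^{-1}\|T_n\|_{p,1}\ge\tfrac{2(n-2R)}{n}\bigl(2(S_\infty-\varepsilon)\bigr)^{1/p}$; letting $n\to\infty$ and then $\varepsilon\to0$ gives $\liminf_n n^{-1}\|T_n\|_{p,1}\ge 2^{1/p}\bigl((2^p-1)\zeta(p)\bigr)^{1/p}$, proving \eqref{lem5eq2}.

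\emph{Where the difficulty lies.} The only subtle point is \eqref{lem5eq1}: a naive term-by-term comparison of $\|T_n\|_{p,1}$ and $\|T_{n+1}\|_{p,1}$ fails, since the extreme columns of $T_{n+1}$ are asymptotically too small to absorb the gap between $\tfrac1n$ and $\tfrac1{n+1}$. The averaging identity $\sum_{c=1}^{n+1}\bigl(\|T_{n+1}\|_{p,1}-\|\operatorname{col}_c(T_{n+1})\|_p\bigr)=n\|T_{n+1}\|_{p,1}$ is what unlocks the proof, because each individual summand already dominates $\|T_n\|_{p,1}$ by the trivial monotonicity of $S_m$. I expect establishing the closed form and spotting this identity to be essentially all of the work; everything after that is bookkeeping.
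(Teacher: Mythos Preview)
Your proof is correct, and in fact cleaner than the paper's on both counts.

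For \eqref{lem5eq1} the paper proceeds similarly in spirit---it also compares the $n$ columns of $T_n$ with $n$ of the $n+1$ columns of $T_{n+1}$---but it removes only the single ``middle'' column $n_0=\lfloor n/2\rfloor+1$ and then needs the extra observation that $\max_j\|\operatorname{col}_j(T_n)\|_p=\|\operatorname{col}_{n_0}(T_n)\|_p$ to absorb the leftover term $nA_{n_0}-\sum_jB_j$. Your averaging device sidesteps this entirely: since $\sum_{j\ne c}\alpha_j\ge\sum_j\beta_j$ holds for \emph{every} $c$, summing over $c$ immediately converts $n+1$ copies of $\|T_n\|_{p,1}$ into $n\|T_{n+1}\|_{p,1}$ with no appeal to where the maximum column sits. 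For \eqref{lem5eq2} the paper invokes the Stolz--Ces\`aro theorem, then Lagrange's mean value theorem to control the telescoping differences $b_j^{1/p}-(\sum_i|a_{ij}|^p)^{1/p}$, and a second application of Stolz--Ces\`aro to show the resulting sum tends to $0$; your elementary squeeze (uniform upper bound $(2S_\infty)^{1/p}$, and $S_R>S_\infty-\varepsilon$ on all but $O(1)$ interior columns) achieves the same conclusion in two lines. The paper's approach does yield slightly more along the way (it identifies the Stolz--Ces\`aro limit with the middle-column norm), but for the lemma as stated your argument is both shorter and more transparent.
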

\begin{proof}
    Let $n_0=\left[\frac{n}{2}\right]+1$. Since $\sum_{i=1}^{n}|a_{im}|^p=\sum_{i=2}^{n+1}|a_{i(m+1)}|^p$, we have
    $$\sum_{i=1}^{n+1}|a_{i(m+1)}|^p>\sum_{i=1}^{n}|a_{im}|^p$$
    and 
       $$\sum_{i=1}^{n+1}|a_{im}|^p>\sum_{i=1}^{n}|a_{im}|^p.$$
       Moreover, it is easy to see that
       $$\max_{1\leq m\leq n}\sum_{i=1}^{n}|a_{im}|^p=\sum_{i=1}^{n}|a_{in_0}|^p.$$
    Thus, we have
    \begin{align*}
        n\|T_{n+1}\|_{p,1}-&(n+1)\|T_{n}\|_{p,1}\\&=n\sum_{j=1}^{n+1}\left(\sum_{i=1}^{n+1}|a_{ij}|^p\right)^\frac{1}{p}-(n+1)\sum_{j=1}^{n}\left(\sum_{i=1}^{n}|a_{ij}|^p\right)^\frac{1}{p}\\
        &=n\left(\sum_{\substack{1\leq j\leq n+1\\j\neq n_0}}\left(\sum_{i=1}^{n+1}|a_{ij}|^p\right)^\frac{1}{p}-\sum_{j=1}^{n}\left(\sum_{i=1}^{n}|a_{ij}|^p\right)^\frac{1}{p}\right)\\
        &\quad+n\left(\sum_{i=1}^{n+1}|a_{in_0}|^p\right)^\frac{1}{p}-\sum_{j=1}^{n}\left(\sum_{i=1}^{n}|a_{ij}|^p\right)^\frac{1}{p}\\
        &>n\left(\sum_{\substack{1\leq j\leq n+1\\j\neq n_0}}\left(\sum_{i=1}^{n+1}|a_{ij}|^p\right)^\frac{1}{p}-\sum_{j=1}^{n}\left(\sum_{i=1}^{n}|a_{ij}|^p\right)^\frac{1}{p}\right)\\
        &\quad+n\left(\sum_{i=1}^{n+1}|a_{in_0}|^p\right)^\frac{1}{p}-n\cdot\max_{1\leq j\leq n}\left(\sum_{i=1}^{n}|a_{ij}|^p\right)^\frac{1}{p}\\
        &=n\left(\sum_{\substack{1\leq j\leq n+1\\j\neq n_0}}\left(\sum_{i=1}^{n+1}|a_{ij}|^p\right)^\frac{1}{p}-\sum_{j=1}^{n}\left(\sum_{i=1}^{n}|a_{ij}|^p\right)^\frac{1}{p}\right)\\
        &\quad+n\left(\sum_{i=1}^{n+1}|a_{in_0}|^p\right)^\frac{1}{p}-n\left(\sum_{i=1}^{n}|a_{in_0}|^p\right)^\frac{1}{p}>0.
    \end{align*}
By the Stolz--Ces\`aro theorem, we get
    \begin{align*}
        \lim_{n\to \infty}n^{-1}\|T_n\|_{p,1}&=\lim_{n\to \infty}\frac{1}{n}\sum_{j=1}^{n}\left(\sum_{i=1}^{n}|a_{ij}|^p\right)^\frac{1}{p}\\
        &=\lim_{n\to \infty}\left[\sum_{j=1}^{n}\left(\sum_{i=1}^{n}|a_{ij}|^p\right)^\frac{1}{p}-\sum_{j=1}^{n-1}\left(\sum_{i=1}^{n-1}|a_{ij}|^p\right)^\frac{1}{p}\right]\\
        &=\lim_{n\to \infty}\left[\left(\sum_{i=1}^{n}|a_{in_0}|^p\right)^\frac{1}{p}+\sum_{j=1}^{n-1}\left(b_j^\frac{1}{p}-\left(\sum_{i=1}^{n-1}|a_{ij}|^p\right)^\frac{1}{p}\right)\right],
    \end{align*}
where $b_j=\begin{cases}
    \sum_{i=1}^{n}|a_{i(j+1)}|^p,& j\geq n_0\\
    \sum_{i=1}^{n}|a_{ij}|^p,& j<n_0
\end{cases}$. Using Lagrange's mean value theorem, we have
    $$
        b_j^\frac{1}{p}-\left(\sum_{i=1}^{n-1}|a_{ij}|^p\right)^\frac{1}{p}=\begin{cases}\frac{1}{p}|a_{1(j+1)}|^p\cdot C_{nj}^{\frac{1}{p}-1},& j\geq n_0\\
        \frac{1}{p}|a_{nj}|^p\cdot C_{nj}^{\frac{1}{p}-1},& j\geq n_0
        \end{cases} $$
    where, $\sum_{i=1}^{n-1}|a_{ij}|^p<C_{nj}<b_j$. Thus, we get that
       \begin{align*}
        0<\sum_{j=1}^{n-1}&\left(b_j^\frac{1}{p}-\left(\sum_{i=1}^{n-1}|a_{ij}|^p\right)^\frac{1}{p}\right)\\
        &=\frac{1}{p}\left(\sum_{j=1}^{n_0-1}|a_{nj}|^p\cdot C_{nj}^{\frac{1}{p}-1}+\sum_{j=n_0}^{n-1}|a_{1(j+1)}|^p\cdot C_{nj}^{\frac{1}{p}-1}\right)\\
        &<\frac{1}{p}\left(\sum_{j=1}^{n_0-1}|a_{nj}|^p\left(\sum_{i=1}^{n-1}|a_{ij}|^p\right)^{\frac{1}{p}-1}+\sum_{j=n_0}^{n-1}|a_{1(j+1)}|^p\left(\sum_{i=1}^{n-1}|a_{ij}|^p\right)^{\frac{1}{p}-1}\right)\\
        &<\frac{1}{p\cdot(n-1)^{1-\frac{1}{p}}}\left(\sum_{j=1}^{n_0-1}|a_{nj}|+\sum_{j=n_0}^{n-1}|a_{1(j+1)}|\right).
        \end{align*}
    Applying the Stolz--Ces\`aro theorem, we obtain
      \begin{align*}
    \lim_{n\to \infty}&\frac{1}{p\cdot(n-1)^{1-\frac{1}{p}}}\left(\sum_{j=1}^{n_0-1}|a_{nj}|+\sum_{j=n_0}^{n-1}|a_{1(j+1)}|\right)\\
    &=\frac{1}{p}\lim_{n\to \infty}\frac{1}{n^{1-\frac{1}{p}}-(n-1)^{1-\frac{1}{p}}}\left(\sum_{j=1}^{(n+1)_0-1}|a_{(n+1)j}|+\sum_{j=(n+1)_0}^{n}|a_{1(j+1)}|\right.\\
    &\qquad\qquad\left.-\sum_{j=1}^{n_0-1}|a_{nj}|-\sum_{j=n_0}^{n-1}|a_{1(j+1)}|\right)\\
    &=\frac{1}{p}\lim_{n\to \infty}\frac{1}{n^{1-\frac{1}{p}}-(n-1)^{1-\frac{1}{p}}}S_n=0,
       \end{align*}
since $S_n=\begin{cases}
    \frac{2}{2n-1},& \text{if} ~n~\text{is even}\\
    \frac{4n^2+1}{4n^3-n},& \text{if} ~n~\text{is odd}
\end{cases}$. Hence
       $$\lim_{n\to \infty}\sum_{j=1}^{n-1}\left(b_j^\frac{1}{p}-\left(\sum_{i=1}^{n-1}|a_{ij}|^p\right)^\frac{1}{p}\right)=0.$$
In view of above, we have that
     \begin{align*}
        \lim_{n\to \infty}n^{-1}\|T_n\|_{p,1}&=\lim_{n\to \infty}\left(\sum_{i=1}^{n}|a_{in_0}|^p\right)^\frac{1}{p}\\
        &=\lim_{n\to \infty}2\left(\sum_{i=1}^{n_0-1}\frac{2}{(2k-1)^p}+\frac{(-1)^{n+1}+1}{2(2n_0-1)^p}\right)^\frac{1}{p}\\
        &=2^\frac{1}{p}\left(\left(2^p-1\right)\zeta(p)\right)^\frac{1}{p}.
    \end{align*}
This completes the proof of Lemma \ref{lem5}.
\end{proof}
\section{Main Results}
From the definition of $\ell_p$ norm, we have
\begin{equation}\label{norm}
\left\|T_n\right\|_p=\left[\sum_{i, j=1}^n\left|a_{i j}\right|^p\right]^{\frac{1}{p}}=\left[2^p \sum_{k=1}^n \frac{2 n-2 k+1}{(2 k-1)^p}\right]^{\frac{1}{p}}.
\end{equation}
We begin with the following theorem that gives a lower bound of $\ell_p$ norm of $T_n$, which is independent of the size. 
\begin{theorem}\label{thm1}
Let $T_n$ be as in (\ref{matrix}), and let $p\geq1$. If $m\geq n$, then
\begin{equation}\label{generalineq1}
   2\leq n^{-\frac{1}{p}}\|T_n\|_{p}\leq m^{-\frac{1}{p}}\|T_m\|_{p}.
\end{equation}
\end{theorem}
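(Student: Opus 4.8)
The plan is to exploit the closed form (\ref{norm}). Set $S_n:=\sum_{k=1}^n\frac{2n-2k+1}{(2k-1)^p}$, so that $\|T_n\|_p=(2^pS_n)^{1/p}=2S_n^{1/p}$ and hence $n^{-1/p}\|T_n\|_p=2\,(S_n/n)^{1/p}$. Thus the whole statement reduces to showing that the sequence $(S_n/n)_{n\ge 1}$ is nondecreasing: the chain $n^{-1/p}\|T_n\|_p\le m^{-1/p}\|T_m\|_p$ for $m\ge n$ then follows by iterating the one-step inequality, and since $S_1/1=1$ (directly $\|T_1\|_p=|a_{11}|=2$), monotonicity forces $S_n/n\ge 1$, i.e. $n^{-1/p}\|T_n\|_p\ge 2$, for every $n$.

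The monotonicity will come from analyzing the one-step increment. Replacing $n$ by $n+1$ and using $2(n+1)-2k+1=(2n-2k+1)+2$ for $1\le k\le n$, together with the extra term at $k=n+1$, which is $\frac{1}{(2n+1)^p}$, one gets $S_{n+1}=S_n+T_n$ with $T_n:=2\sum_{k=1}^n\frac{1}{(2k-1)^p}+\frac{1}{(2n+1)^p}$. Consequently $\frac{S_{n+1}}{n+1}-\frac{S_n}{n}=\frac{nT_n-S_n}{n(n+1)}$, and a direct cancellation gives $nT_n-S_n=\sum_{k=1}^n\bigl(2n-(2n-2k+1)\bigr)(2k-1)^{-p}+\frac{n}{(2n+1)^p}=\sum_{k=1}^n(2k-1)^{1-p}+\frac{n}{(2n+1)^p}$, which is strictly positive because $p\ge 1$ makes every term $(2k-1)^{1-p}$ positive. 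Hence $S_n/n<S_{n+1}/(n+1)$ for all $n$, and the claim is proved.

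I do not expect a genuine obstacle here; the only point requiring care is the term-by-term bookkeeping that produces the identity $S_{n+1}=S_n+T_n$ and the ensuing cancellation in $nT_n-S_n$, and one should stay consistent with the reindexing already used to derive (\ref{norm}) from the definition of $\|\cdot\|_p$. The argument is uniform in $p\ge 1$ — in particular $p=1$ needs no separate treatment (there $(2k-1)^{1-p}=1$) — and equality holds throughout at $n=m=1$.
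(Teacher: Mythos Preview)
Your argument is correct and is essentially the same as the paper's: both compute $\frac{1}{n+1}\|T_{n+1}\|_p^{\,p}-\frac{1}{n}\|T_n\|_p^{\,p}$ from the closed form (\ref{norm}) and simplify it to $\frac{2^p}{n(n+1)}\Bigl(\sum_{k=1}^n(2k-1)^{1-p}+\frac{n}{(2n+1)^p}\Bigr)>0$, then read off the chain of inequalities starting at $\|T_1\|_p=2$. The only cosmetic issue is that you denote the increment by $T_n$, which clashes with the paper's use of $T_n$ for the matrix itself; pick a different symbol.
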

\begin{proof} From the equality (\ref{norm}), we have
\begin{align*}
\frac{1}{n+1}&\|T_{n+1}\|_p^p-\frac{1}{n}\left\|T_{n}\right\|_p^p\\
&=\frac{2^p}{n+1}\sum_{k=1}^{n+1}\frac{2n-2k+3}{(2k-1)^p}-\frac{2^p}{n}\sum_{k=1}^{n}\frac{2n-2k+1}{(2k-1)^p}\\
&=2^p\left(\frac{1}{n+1}\sum_{k=1}^{n+1}\frac{2n-2k+3}{(2k-1)^p}-\frac{1}{n}\sum_{k=1}^{n}\frac{2n-2k+1}{(2k-1)^p}\right)\\
&=2^p\left(\frac{1}{(n+1)(2n+1)^p}+\sum_{k=1}^n\left(\frac{2n-2k+3}{(n+1)(2k-1)^p}-\frac{2n-2k+1}{n(2k-1)^p}\right)\right)\\
&=2^p\left(\frac{1}{(2n+1)^p}+\frac{1}{n(n+1)}\sum_{k=1}^n\frac{1}{(2k-1)^{p-1}}\right)>0.
\end{align*}
Thus, we have
$$2= \|T_1\|_{p}\leq n^{-\frac{1}{p}}\|T_n\|_{p}\leq m^{-\frac{1}{p}}\|T_m\|_{p}$$
\end{proof}
\begin{remark}\label{rem1}
    By the calculation in \cite{bozkurt1}, we have
$$\lim_{m\to\infty}m^{-\frac{1}{p}}\|T_m\|_{p}=2^{\frac{1}{p}}\left[(2^p-1)\zeta(p)\right]^{\frac{1}{p}},$$
hence 
$$  2\leq n^{-\frac{1}{p}}\|T_n\|_{p}<2^{\frac{1}{p}}\left[(2^p-1)\zeta(p)\right]^{\frac{1}{p}}.$$
We assume that $1<p<\mu$. Then, by the Lemma \ref{lem3}, there is a positive integer $N_{1,p}$ such that
$$N_{1,p}^{-\frac{1}{p}}\|T_{N_{1,p}}\|_{p}\leq 4\left(\frac{1}{2}+\frac{1}{2^p-1}\right)^{\frac{1}{p}}\leq (N_{1,p}+1)^{-\frac{1}{p}}\|T_{N_{1,p}+1}\|_{p},$$
for fixed $p$.
\end{remark}
\begin{remark}\label{rem2}
    Similarly to the above remark, if $1<p<\mu$, then by Lemma \ref{lem3} and Lemma \ref{lem5}, there is a positive integer $N_{2, p}$ such that 
    $$N_{2, p}^{-1}\|T_{N_{2, p}}\|_{p,1}\leq 4\left(\frac{1}{2}+\frac{1}{2^p-1}\right)^{\frac{1}{p}}\leq (N_{2, p}+1)^{-1}\|T_{N_{2, p}+1}\|_{p,1},$$
    for fixed $p$.  In Table 1, we give some values of $N_{1,p}$ and $N_{2,p}$.
\begin{table}[ht!]
\centering
\begin{tabular}{ |c|c|c|c|c|c|c|c|c|c|c|c| } 
\hline
$p$ & $1.1$ & $1.2$ & $1.3$ & $1.4$ & $1.5$ & $1.51$ & $1.52$ & $1.53$ & $1.54$ & $1.55$ & $1.56$\\
\hline
$N_{1,p}$ & $8$ & $10$ & $13$ & $20$ & $44$ & $49$ & $56$ & $65$ & $76$ & $92$ & $115$ \\
\hline
$N_{2,p}$ & $8$ & $10$ & $13$ & $20$ & $44$ & $50$ & $57$ & $66$ & $78$ & $94$ & $118$\\
\hline
\end{tabular}
\caption{}
\label{table:1}
\end{table}
\end{remark}
The next theorem gives new bounds of the $\ell_p$ norm of $T_n$ that depend on $n$ and $p$.
\begin{theorem}\label{thm2}
Let $T_n$ be as in (\ref{matrix}), and let $p>1, n\geq2$. Then
\begin{equation}\label{generalineq2}
   \left[\left(1+\frac{2}{n^2}\right)(2^p-1)\zeta(p)-C'(n,p)\right]^\frac{1}{p}< n^{-\frac{1}{p}}\|T_n\|_{p}<\left(2-\frac{1}{n}\right)^{\frac{1}{p}}\left[(2^p-1)\zeta(p)-C''(n,p)\right]^{\frac{1}{p}},
\end{equation} where
$$C'(n,p)=\frac{n^2+2}{(p-1)n^{p+1}}+\frac{2^{p+1}}{n(2n-1)^p},\quad C''(n,p)=\frac{2^{p-1}}{(p-1)(2n+1)^{p-1}}$$
\end{theorem}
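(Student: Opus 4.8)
Everything is extracted from the closed form \eqref{norm}. Dividing by $n$ and writing $\frac{2n-2k+1}{n}=2-\frac{2k-1}{n}$ gives the exact identity
\[
n^{-1}\|T_n\|_p^{\,p}=2^{p+1}\sum_{k=1}^{n}\frac1{(2k-1)^{p}}-\frac{2^{p}}{n}\sum_{k=1}^{n}\frac1{(2k-1)^{p-1}} .
\]
Since $\sum_{k\ge1}(2k-1)^{-p}=(1-2^{-p})\zeta(p)$, the first term equals $2(2^{p}-1)\zeta(p)$ minus the odd tail $2^{p+1}\sum_{k>n}(2k-1)^{-p}$. Thus the whole problem reduces to controlling that tail and the ``correction sum'' $\frac1n\sum_{k=1}^n(2k-1)^{1-p}$, which I would do by comparing these sums with integrals of the decreasing, convex functions $x\mapsto(2x-1)^{-p}$ and $x\mapsto(2x-1)^{-(p-1)}$.

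\textbf{Upper bound.} Because $p>1$ we have $(2k-1)^{1-p}\ge(2k-1)^{-p}$ for every $k$, strictly for $k\ge2$, so for $n\ge2$
\[
n^{-1}\|T_n\|_p^{\,p}<\Bigl(2-\tfrac1n\Bigr)\,2^{p}\sum_{k=1}^{n}\frac1{(2k-1)^{p}} .
\]
Now bound the partial sum from below by the full series minus $\int_{n+1}^{\infty}(2x-1)^{-p}\,dx$, using that $(2k-1)^{-p}\ge\int_k^{k+1}(2x-1)^{-p}\,dx$ for each $k$; the integral equals $2^{-p}C''(n,p)$, so $2^{p}\sum_{k=1}^{n}(2k-1)^{-p}\le(2^{p}-1)\zeta(p)-C''(n,p)$, and taking $p$-th roots yields the right-hand inequality.

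\textbf{Lower bound.} Keeping the identity in the form
\[
n^{-1}\|T_n\|_p^{\,p}=2(2^{p}-1)\zeta(p)-2^{p+1}\!\!\sum_{k>n}\frac1{(2k-1)^{p}}-\frac{2^{p}}{n}\sum_{k=1}^{n}\frac1{(2k-1)^{p-1}},
\]
the claimed inequality is equivalent to
\[
\Bigl(1-\tfrac{2}{n^{2}}\Bigr)(2^{p}-1)\zeta(p)+C'(n,p)>2^{p+1}\!\!\sum_{k>n}\frac1{(2k-1)^{p}}+\frac{2^{p}}{n}\sum_{k=1}^{n}\frac1{(2k-1)^{p-1}} .
\]
Here I would bound the tail from above by a midpoint/convexity integral comparison (roughly $\frac{(2n)^{1-p}}{2(p-1)}$) and the correction sum from above by $1+\int_1^{n}(2x-1)^{1-p}\,dx$, with the $k=1$ term split off, thereby reducing the goal to a purely elementary inequality between explicit functions of $n$ and $p$; the two summands of $C'(n,p)$ are exactly the slack that makes this work, and one also uses $(2^{p}-1)\zeta(p)=2^{p}\sum_{k\ge1}(2k-1)^{-p}>2^{p}$. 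Raising to the $\tfrac1p$ power finishes it.

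\textbf{Where the work is.} The lower bound is the crux. Unlike the upper bound it is essentially tight (already at $n=2$ the two sides agree to within a percent), so a crude integral comparison is not sharp enough: one must exploit the convexity of $(2x-1)^{-s}$ to get the better midpoint-type estimates, keep the extra correction term $\frac{2^{p+1}}{n(2n-1)^p}$ in $C'$, and split the range of $p$ (perhaps treating the smallest $n$ directly) in order to absorb the near-cancellation, as $p\to1^{+}$, of the $O\!\bigl(1/(p-1)\bigr)$ contributions coming from $\zeta(p)$, from the tail integral, and from $C'(n,p)$. The upper bound, by contrast, needs only the single observation $\sum(2k-1)^{1-p}>\sum(2k-1)^{-p}$ followed by one integral estimate.
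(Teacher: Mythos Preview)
Your upper bound is correct and in fact cleaner than the paper's route. From the identity
\[
n^{-1}\|T_n\|_p^{\,p}=2^{p+1}\sum_{k=1}^{n}\frac1{(2k-1)^{p}}-\frac{2^{p}}{n}\sum_{k=1}^{n}\frac1{(2k-1)^{p-1}},
\]
your observation $(2k-1)^{1-p}\ge(2k-1)^{-p}$ gives $n^{-1}\|T_n\|_p^{\,p}<(2-\tfrac1n)\,2^p\sum_{k\le n}(2k-1)^{-p}$ in one line; the paper reaches the same intermediate bound but via the Binet--Cauchy identity and a double sum. (One slip of wording: you write ``bound the partial sum from below'' but you mean \emph{from above}; the inequality $2^{p}\sum_{k\le n}(2k-1)^{-p}\le(2^{p}-1)\zeta(p)-C''(n,p)$ you actually display is the correct direction.)

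The lower bound, however, has a genuine gap, and you diagnose it yourself: after reducing to
\[
\Bigl(1-\tfrac{2}{n^{2}}\Bigr)(2^{p}-1)\zeta(p)+C'(n,p)>2^{p+1}\!\!\sum_{k>n}\frac1{(2k-1)^{p}}+\frac{2^{p}}{n}\sum_{k=1}^{n}\frac1{(2k-1)^{p-1}},
\]
you do not verify this, and you anticipate needing case splits and separate treatment of small $n$. The paper sidesteps all of this with a different decomposition. Applying the Binet--Cauchy identity to $\sum_k\frac{2n-2k+1}{(2k-1)^p}$ yields
\[
n^{-1}\|T_n\|_p^{\,p}=2^{p}\Bigl[\sum_{k=1}^n d_k+\frac{2}{n^2}\sum_{1\le k<m\le n}(m-k)(d_k-d_m)\Bigr],\qquad d_k=(2k-1)^{-p},
\]
and the elementary estimate $(m-k)(d_k-d_m)\ge d_{m-1}-d_m$ makes the inner sum telescope to $\sum_{k\le n}d_k-\dfrac{n}{(2n-1)^p}$. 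This already gives
\[
n^{-1}\|T_n\|_p^{\,p}>2^{p}\Bigl[\Bigl(1+\tfrac{2}{n^2}\Bigr)\sum_{k=1}^n d_k-\frac{2}{n(2n-1)^{p}}\Bigr],
\]
so the factor $1+\tfrac{2}{n^2}$ appears \emph{multiplying the partial sum}, not as a piece to be carved out of $(2^p-1)\zeta(p)$. A single Hermite--Hadamard (midpoint) bound on the tail, $\sum_{k>n}d_k\le\int_{n+1/2}^{\infty}(2x-1)^{-p}\,dx=\tfrac{1}{2^{p}(p-1)n^{p-1}}$, then produces exactly $C'(n,p)$ with no further work, uniformly in $n\ge2$ and $p>1$. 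Your route, by contrast, has to control $\tfrac{2^p}{n}\sum_{k\le n}(2k-1)^{1-p}$ against $(1-\tfrac{2}{n^2})(2^p-1)\zeta(p)$, and that comparison is where the near-cancellation as $p\to1^+$ lives; there is no obvious single estimate that closes it.
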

\begin{proof} By the equality (\ref{norm}) and using the Binet--Cauchy identity with $a_k=\frac{1}{n}, b_k=2n-2k+1, c_k=\frac{1}{n}, d_k=\frac{1}{(2k-1)^p}$, we have
\begin{align}\label{thm2eq1}
\|T\|_p&=2\left[\sum_{k=1}^n\frac{2n-2k+1}{(2k-1)^p}\right]^\frac{1}{p}\notag\\
&=2\left[\frac{1}{n}\left(\sum_{k=1}^nb_k\right)\left(\sum_{k=1}^nd_k\right)+\frac{1}{n}\sum_{1\leq k<m\leq n}(b_m-b_k)(d_m-d_k)\right]^\frac{1}{p}\notag\\
&=2\left[n\sum_{k=1}^n\frac{1}{(2k-1)^p}+\frac{2}{n}\sum_{k=1}^{n-1}\sum_{m=k+1}^{n}(m-k)\left(\frac{1}{(2k-1)^p}-\frac{1}{(2m-1)^p}\right)\right]^\frac{1}{p}.
\end{align}
Since $d_k\geq d_{m-1}$ and $m-k\geq1$ for $1\leq k<m\leq n$, we have
\begin{align*}
    \sum_{k=1}^{n-1}\sum_{m=k+1}^{n}&(m-k)\left(\frac{1}{(2k-1)^p}-\frac{1}{(2m-1)^p}\right)\\
    &> \sum_{k=1}^{n-1}\sum_{m=k+1}^{n}\left(\frac{1}{(2m-3)^p}-\frac{1}{(2m-1)^p}\right)\\
    &= \sum_{k=1}^{n-1}\left(\frac{1}{(2k-1)^p}-\frac{1}{(2n-1)^p}\right)\\
    &= \sum_{k=1}^n\frac{1}{(2k-1)^p}-\frac{n}{(2n-1)^p},
\end{align*}
hence
$$\|T\|_p>2n^\frac{1}{p}\left[\left(1+\frac{2}{n^2}\right)\sum_{k=1}^n\frac{1}{(2k-1)^p}-\frac{2}{n(2n-1)^p}\right]^\frac{1}{p}.$$
On the other hand, the function $\frac{1}{(2x-1)^p}$ is a convex on interval $[k-\frac{1}{2}, k+\frac{1}{2} ]$, application of the well-known Hermite--Hadamard inequality on intervals $[k-\frac{1}{2}, k+\frac{1}{2} ],~k\geq n+1$ yields
\begin{align*}
    \sum_{k=1}^n\frac{1}{(2k-1)^p}&=\sum_{k=1}^\infty\frac{1}{(2k-1)^p}-\sum_{k=n+1}^\infty\frac{1}{(2k-1)^p}\\
    &\geq \left(1-\frac{1}{2^p}\right)\zeta(p)-\sum_{k=n+1}^\infty\int_{k-\frac{1}{2}}^{k+\frac{1}{2}}\frac{1}{(2x-1)^p}dx\\
    &= \left(1-\frac{1}{2^p}\right)\zeta(p)-\int_{n+\frac{1}{2}}^\infty\frac{1}{(2x-1)^p}dx\\
    &=\left(1-\frac{1}{2^p}\right)\zeta(p)-\frac{1}{2^{p}(p-1)n^{p-1}}.
\end{align*}
Then it follows
$$n^{-\frac{1}{p}}\|T\|_p>\left[\left(1+\frac{2}{n^2}\right)(2^p-1)\zeta(p)-\frac{n^2+2}{(p-1)n^{p+1}}-\frac{2^{p+1}}{n(2n-1)^p}\right]^\frac{1}{p}.$$
Now we shall prove the last inequality in (\ref{generalineq2}). From (\ref{thm2eq1}), we have
\begin{align*}
\left\|T_n\right\|_p&=2\left[n\sum_{k=1}^n\frac{1}{(2k-1)^p}+\frac{2}{n}\sum_{k=1}^{n-1}\sum_{m=k+1}^{n}(m-k)\left(\frac{1}{(2k-1)^p}-\frac{1}{(2m-1)^p}\right)\right]^\frac{1}{p}\\
&< 2\left[n\sum_{k=1}^n\frac{1}{(2k-1)^p}+\frac{2}{n}\sum_{k=1}^{n-1}\frac{1}{(2k-1)^p}\sum_{m=k+1}^{n}(m-k)\right]^\frac{1}{p}\\
&= 2\left[n\sum_{k=1}^n\frac{1}{(2k-1)^p}+\frac{1}{n}\sum_{k=1}^{n-1}\frac{(n-k)(n-k+1)}{(2k-1)^p}\right]^\frac{1}{p}\\
&\leq 2\left[n\sum_{k=1}^n\frac{1}{(2k-1)^p}+\frac{1}{n}\sum_{k=1}^{n}\frac{(n-1)n}{(2k-1)^p}\right]^\frac{1}{p}\\
&= 2n^\frac{1}{p}\left[\left(2-\frac{1}{n}\right)\sum_{k=1}^n\frac{1}{(2k-1)^p}\right]^\frac{1}{p}.
\end{align*}
By the decreasing property of the function $\frac{1}{(2x-1)^p}$, it follows that
\begin{align*}
    \sum_{k=1}^n\frac{1}{(2k-1)^p}&=\sum_{k=1}^\infty\frac{1}{(2k-1)^p}-\sum_{k=n+1}^\infty\frac{1}{(2k-1)^p}\\
    &<\left(1-\frac{1}{2^p}\right)\zeta(p)-\int_{n+1}^\infty\frac{1}{(2x-1)^p}dx\\
    &=\left(1-\frac{1}{2^p}\right)\zeta(p)-\frac{1}{2(p-1)(2n+1)^{p-1}}.
\end{align*}
Thus, we get
$$n^{-\frac{1}{p}}\left\|T_n\right\|_p<\left(2-\frac{1}{n}\right)^{\frac{1}{p}}\left[(2^p-1)\zeta(p)-\frac{2^{p-1}}{(p-1)(2n+1)^{p-1}}\right]^\frac{1}{p}.$$
The proof is complete.
\end{proof}
\begin{remark}
It should be noticed here that the right-hand side of (\ref{generalineq2}) improves the right-hand side of (\ref{bozkurteq1}).
\end{remark}
The following two theorems give an answer to the conjecture.
\begin{theorem}\label{mainthm1}
Let $T_n$ be as in (\ref{matrix}), and let $1\leq p<q\leq\infty$. Then the following inequality
\begin{equation}\label{mainthmeq1}
n^{-\frac{1}{q}}\left\|T_{n}\right\|_{p,q}\geq 4\left(\frac{1}{2^{p}-1}\right)^{\frac{1}{p}}, 
\end{equation}
holds for $n\geq3$ and for $n=2, (p\geq\delta~~\text{or}~~1<p<\delta, q\geq \delta_p)$. The opposite inequality holds for $n=1$ and for $n=2, (p=1~~\text{or}~~1< p<\delta, q<\delta_p )$. Here $q=\delta_p$ is the unique root of the following equation
\begin{equation}\label{delta}
2^{-p}\left(\left(1+\frac{1}{3^p}\right)^\frac{q}{p}+2^\frac{q}{p}\right)^p=2^{pq}\left(\frac{1}{2^p-1}\right)^q,
\end{equation}
for fixed $p$. 
\end{theorem}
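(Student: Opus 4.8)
The plan is to rephrase the problem in terms of power means of the column sums. For $1\le j\le n$ set $S_j^{(n)}=\sum_{i=1}^n\frac{1}{\left|2(i-j)+1\right|^p}$, so that $\sum_{i=1}^n|a_{ij}|^p=2^pS_j^{(n)}$; writing $\underline{S}^{(n)}=\left(S_1^{(n)},\dots,S_n^{(n)}\right)$ and adopting the convention $M_\infty(\underline x)=\max_k x_k$ (the increasing limit of $M_r$), one gets
$$n^{-\frac1q}\left\|T_n\right\|_{p,q}=2\left(\frac1n\sum_{j=1}^n\left(S_j^{(n)}\right)^{q/p}\right)^{\frac1q}=2\,M_{q/p}\!\left(\underline{S}^{(n)}\right)^{\frac1p}.$$
Since $4\left(\frac1{2^p-1}\right)^{1/p}=2\left(\frac{2^p}{2^p-1}\right)^{1/p}$, inequality (\ref{mainthmeq1}) is equivalent to $M_{q/p}\!\left(\underline{S}^{(n)}\right)\ge\frac{2^p}{2^p-1}$. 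As $q>p$ forces $q/p>1$, the power mean inequality (\ref{meanineq}) yields $M_{q/p}\!\left(\underline{S}^{(n)}\right)\ge M_1\!\left(\underline{S}^{(n)}\right)=\frac1{2^pn}\left\|T_n\right\|_p^p$. The case $n=1$ is then immediate: $M_{q/p}\!\left(\underline{S}^{(1)}\right)=1<\frac{2^p}{2^p-1}$, so the reverse of (\ref{mainthmeq1}) holds.

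For $n\ge3$ I would use monotonicity in the size. By Theorem \ref{thm1} the quantity $M_1\!\left(\underline{S}^{(n)}\right)=\frac1{2^pn}\left\|T_n\right\|_p^p$ is nondecreasing in $n$, so it suffices to verify
$$M_1\!\left(\underline{S}^{(3)}\right)=\frac13\left(5+3^{1-p}+5^{-p}\right)\ge\frac{2^p}{2^p-1}\qquad(p\ge1).$$
Using the elementary bound $2^p-1\ge2^{p-1}$ (valid for $p\ge1$) one has $\frac{2^p}{2^p-1}\le1+2^{1-p}$, so it is enough to prove $2+3\cdot3^{-p}+5^{-p}\ge6\cdot2^{-p}$; this is a one–variable check, since $\varphi(p)=2+3\cdot3^{-p}-6\cdot2^{-p}$ vanishes at $p=1$ and has positive derivative on $[1,\infty)$ (because $(3/2)^p\ge3/2$ makes $6\ln2\,(3/2)^p>3\ln3$), while $5^{-p}>0$. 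Hence (\ref{mainthmeq1}) holds (in fact strictly) for all $n\ge3$.

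The decisive case is $n=2$, where $\underline{S}^{(2)}=\left(1+3^{-p},\,2\right)$ and $M_1\!\left(\underline{S}^{(2)}\right)=\frac{3+3^{-p}}2$. A short computation shows that $M_1\!\left(\underline{S}^{(2)}\right)\ge\frac{2^p}{2^p-1}$ is equivalent to $2^p-3+(2/3)^p-3^{-p}\ge0$, i.e.\ to the negation of the inequality in Lemma \ref{lem2}; thus this holds exactly when $p\ge\delta$, and for such $p$ the chain $M_{q/p}\ge M_1\ge\frac{2^p}{2^p-1}$ proves (\ref{mainthmeq1}) for every $q>p$. When $1\le p<\delta$, $M_1$ is too small and one must use the actual exponent $u=q/p$. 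Put $c=\frac{2^p}{2^p-1}$ and $h(u)=\left(\frac{1+3^{-p}}c\right)^u+\left(\frac2c\right)^u$; then (\ref{mainthmeq1}) is equivalent to $h(u)\ge2$, and $h(u)=2$ is precisely equation (\ref{delta}) with $q=pu$. The function $h$ is strictly convex with $h(0)=2$, and Lemma \ref{lem2} gives $h(1)<2$. If $p=1$ then $c=2$, so $h(u)=(2/3)^u+1<2$ for all $u>0$ and the reverse inequality holds for every finite $q>1$. If $1<p<\delta$ then $c<2$, hence $\frac2c>1$ and $h(u)\to\infty$; by strict convexity and $h(0)=2>h(1)$, the equation $h(u)=2$ has a unique root $b>1$, with $h<2$ on $(0,b)$ and $h>2$ on $(b,\infty)$. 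Setting $\delta_p:=pb$ — which is the unique root of (\ref{delta}) in $(p,\infty)$ — we obtain (\ref{mainthmeq1}) for $q\ge\delta_p$ (with equality at $q=\delta_p$) and its reverse for $p<q<\delta_p$.

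The main obstacle I anticipate is the sub-case $n=2$, $1<p<\delta$: one must translate (\ref{mainthmeq1}) into the clean convex–function statement $h(u)\ge2$, recognise $h(u)=2$ as equation (\ref{delta}), and then read off from the convexity the precise threshold $\delta_p$ together with the sign of $h-2$ on each side of it. A secondary point of care is the reduction through Lemma \ref{lem2}, which is exactly what makes $\delta$, rather than $1$, the true break-point for the comparison of $M_1$ with $\frac{2^p}{2^p-1}$; by contrast the reduction to power means, the cases $n=1$ and $n\ge3$, and the part $p\ge\delta$ of the case $n=2$ are routine once the reformulation via $M_{q/p}\!\left(\underline{S}^{(n)}\right)$ is in hand.
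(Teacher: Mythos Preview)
Your proof is correct and follows the same overall architecture as the paper: recast $n^{-1/q}\|T_n\|_{p,q}$ as a power mean of the column sums, reduce via the power--mean inequality to the case $q=p$ (i.e.\ to $n^{-1/p}\|T_n\|_p$), invoke Theorem~\ref{thm1} for monotonicity in $n$, and use Lemma~\ref{lem2} to locate the threshold $\delta$ at $n=2$.

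Where your argument differs is in two technical steps, and in both places it is tidier than the paper's. For $n\ge 3$ the paper proves directly that
\[
h(p)=2^{p+1}-5+3(2/3)^p+(2/5)^p-3\cdot 3^{-p}-5^{-p}>0
\]
via a two--stage derivative estimate; you instead insert the elementary bound $\tfrac{2^p}{2^p-1}\le 1+2^{1-p}$ and reduce to the one--line check that $\varphi(p)=2+3\cdot 3^{-p}-6\cdot 2^{-p}$ is nonnegative on $[1,\infty)$, which is shorter and loses nothing. For $n=2$ with $1<p<\delta$, the paper argues abstractly from monotonicity and continuity of power means to produce $\delta_p$, whereas you write the condition as $h(u)=a^u+b^u\ge 2$ with $a<1<b$, observe $h(0)=2$, $h(1)<2$ (by Lemma~\ref{lem2}), $h(u)\to\infty$, and use strict convexity to read off the unique second root and the sign pattern on either side; this is both more explicit and more transparent about \emph{why} $\delta_p$ is unique. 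The trade--off is that the paper's direct attack on $h(p)$ for $n\ge 3$ avoids the slight loss in your intermediate bound, but that loss is harmless here.
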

\begin{proof} {\bf Case $\boldsymbol{(n\geq3)}$}: By the power mean inequality, we have
\begin{align}\label{monoeq1}
n^{-\frac{1}{q}}\left\|T_{n}\right\|_{p,q}&=n^{-\frac{1}{q}}\left[\sum_{j=1}^n\left(\sum_{i=1}^n|a_{ij}|^p\right)^\frac{q}{p}\right]^\frac{1}{q}=\left[\frac{1}{n}\sum_{j=1}^n\left(\sum_{i=1}^n|a_{ij}|^p\right)^\frac{q}{p}\right]^\frac{1}{q}\notag\\
&=M_q\left(\underline{\left(\sum_{i=1}^n|a_{ij}|^p\right)^\frac{1}{p}}\right)\notag\\
&>M_p\left(\underline{\left(\sum_{i=1}^n|a_{ij}|^p\right)^\frac{1}{p}}\right)=\left[\frac{1}{n}\sum_{j=1}^n\left(\sum_{i=1}^n|a_{ij}|^p\right)^\frac{p}{p}\right]^\frac{1}{p}\notag\\
&=n^{-\frac{1}{p}}\left[\sum_{j=1}^n\sum_{i=1}^n|a_{ij}|^p\right]^\frac{1}{p}=n^{-\frac{1}{p}}\|T_n\|_p.
\end{align}  
Applying  (\ref{generalineq1}), gives
\begin{equation}\label{mainthm1eq2}
n^{-\frac{1}{p}}\|T_n\|_p>3^{-\frac{1}{p}}\left\|T_{3}\right\|_{p}=3^{-\frac{1}{p}}\left(2^{p}\left(5+\frac{1}{3^{p-1}}+\frac{1}{5^{p}}\right)\right)^{\frac{1}{p}}.
\end{equation}
Combining (\ref{monoeq1}) and (\ref{mainthm1eq2}) leads to
$$
n^{-\frac{1}{q}}\left\|T_{n}\right\|_{p,q} >3^{-\frac{1}{p}}\left(2^{p}\left(5+\frac{1}{3^{p-1}}+\frac{1}{5^{p}}\right)\right)^{\frac{1}{p}}.
$$
To prove the inequality (\ref{mainthmeq1}) for $n\geq3$, it suffices to show that the following inequality
$$
3^{-\frac{1}{p}}\left(2^{p}\left(5+\frac{1}{3^{p-1}}+\frac{1}{5^{p}}\right)\right)^{\frac{1}{p}}>4\left(\frac{1}{2^{p}-1}\right)^{\frac{1}{p}},
$$
i.e.,
$$
2^{p+1}-5+3\left(\frac{2}{3}\right)^{p}+\left(\frac{2}{5}\right)^{p}-\frac{1}{3^{p-1}}-\frac{1}{5^p}>0.
$$
Considering the function $h$ on $[1,+\infty)$ defined by 
$$
h(p)=2^{p+1}-5+3\left(\frac{2}{3}\right)^{p}+\left(\frac{2}{5}\right)^{p}-\frac{1}{3^{p-1}}-\frac{1}{5^p}.
$$
Differentiation yields
\begin{align*}
h'(p)&=2^{p+1} \ln 2+3\left(\frac{2}{3}\right)^{p} \ln \frac{2}{3}+\left(\frac{2}{5}\right)^{p} \ln \frac{2}{5}+\frac{1}{3^{p-1}}\ln3+\frac{1}{5^p}\ln 5\\
&>2^{p+1} \ln 2+3\left(\frac{2}{3}\right)^{p} \ln \frac{2}{3}+\left(\frac{2}{5}\right)^{p} \ln \frac{2}{5}:=h_1(p)
\end{align*}
and 
$$
h_1'(p)=  2^{p+1} (\ln 2)^2+3\left(\frac{2}{3}\right)^{p} \left(\ln \frac{2}{3}\right)^2+\left(\frac{2}{5}\right)^{p} \left(\ln \frac{2}{5}\right)^2>0.
$$
Thus, we deduce that for $p\geq1$,
 $$
h_1(p) \geq h_1(1)=4\ln 2+2\ln \frac{2}{3}+\frac{2}{5}\ln \frac{2}{5}\approx1.5951\ldots>0.
$$
Therefore, the function $h(p)$ is increasing on $[1,+\infty[$,  which implies that 
$$h(p)>h(1)=\frac{1}{5}>0$$
for $p\geq1$.

{\bf Case $\boldsymbol{(n=2, p\geq\delta)}$:}  From the inequality (\ref{monoeq1}), we obtain
$$
2^{-\frac{1}{q}}\left\|T_{2}\right\|_{p,q}>2^{-\frac{1}{p}}\left\|T_{2}\right\|_{p}.
$$
Clearly, it suffices to show that 
\begin{equation}\label{eq12}
2^{-\frac{1}{p}}\left\|T_{2}\right\|_{p} \geq 4\left(\frac{1}{2^{p}-1}\right)^{\frac{1}{p}},
\end{equation}
This inequality is equivalent to
 $$3-2^p+\frac{1}{3^p}-\left(\frac{2}{3}\right)^p\leq0,$$
which is true by the Lemma \ref{lem2}.

 {\bf Case $\boldsymbol{(n=2, 1\leq p<\delta)}$:} If $p=1$, then by the power mean inequality, we have
$$2^{-\frac{1}{q}}\left\|T_{2}\right\|_{1, q}=2^{1-\frac{1}{q}}\left(\left(\frac{4}{3}\right)^q+2^q\right)^\frac{1}{q}<\lim_{q\to\infty}2^{1-\frac{1}{q}}\left(\left(\frac{4}{3}\right)^q+2^q\right)^\frac{1}{q}=4.$$
In other words, the inequality (\ref{mainthmeq1}) does not hold for $p=1$.
 
Similarly to the previous case, one has for $1\leq p<\delta$,
\begin{equation}\label{thmeqq2}
2^{-\frac{1}{p}}\left\|T_{2}\right\|_{p} < 4\left(\frac{1}{2^{p}-1}\right)^{\frac{1}{p}}.
\end{equation}

If $1< p<\delta$, then by the power mean inequality, we have  
\begin{align*}
2^{-\frac{1}{q}}\left\|T_{2}\right\|_{p,q}&=2^{1-\frac{1}{q}}\left(\left(1+\frac{1}{3^p}\right)^\frac{q}{p}+2^\frac{q}{p}\right)^\frac{1}{q}\\
&<\lim_{q\to+\infty}2^{1-\frac{1}{q}}\left(\left(1+\frac{1}{3^p}\right)^\frac{q}{p}+2^\frac{q}{p}\right)^\frac{1}{q}=2^{1+\frac{1}{p}}.
\end{align*}
It is easy to see that
$$4\left(\frac{1}{2^{p}-1}\right)^{\frac{1}{p}}<2^{1+\frac{1}{p}}.$$
By the continuity and monotonicity of power mean, there are $q'$ and $q''$ such that $p<q'\leq q''$ and 
$$
2^{-\frac{1}{p}}\left\|T_{2}\right\|_{p}<2^{-\frac{1}{q}}\left\|T_{2}\right\|_{p, q'}\leq4\left(\frac{1}{2^p-1}\right)^{\frac{1}{p}}\leq2^{-\frac{1}{q}}\left\|T_{2}\right\|_{p, q''}<2^{1+\frac{1}{p}}.
$$
Hence, there is exist a unique $q=\delta_p$ for fixed $p$, such that
$$2^{-\frac{1}{q}}\left\|T_{2}\right\|_{p,q}=4\left(\frac{1}{2^p-1}\right)^{\frac{1}{p}},$$
that is,
$$2^{-p}\left(\left(1+\frac{1}{3^p}\right)^\frac{\delta_p}{p}+2^\frac{\delta_p}{p}\right)^p=2^{p\delta_p}\left(\frac{1}{2^p-1}\right)^{\delta_p}.$$
Therefore, by the power mean inequality, we have
$$
2^{-\frac{1}{q}}\left\|T_{2}\right\|_{p,q}<4\left(\frac{1}{2^{p}-1}\right)^{\frac{1}{p}},
$$
for $1<q<\delta_p$, and 
$$
2^{-\frac{1}{q}}\left\|T_{2}\right\|_{p,q}\geq4\left(\frac{1}{2^{p}-1}\right)^{\frac{1}{p}},
$$
for $q\geq\delta_p$. We thus complete the proof.
\end{proof}
\begin{remark} In Table 2, we give some values of $\delta_p$.
\begin{table}[ht!]
\centering
\begin{tabular}{ |c|c|c|c|c|c|c|c| } 
\hline
$p$ & $1.1$ & $1.15$ & $1.2$ & $1.25$ & $1.3$ & $1.35$ & $1.4$\\
\hline
$\delta_p$ & $11.5839...$ & $7.9001...$ & $5.8018...$ & $4.3471...$ & $3.2220...$ & $2.2895...$ & $1.4787...$ \\
\hline
\end{tabular}
\caption{}
\label{table:2}
\end{table}
\end{remark}
\begin{theorem}\label{mainthm2}
Let $T_n$ be as in (\ref{matrix}), and let $1\leq q \leq p\leq\infty$. Then the following inequality
\begin{equation}\label{mainthm2eq0}
 n^{-\frac{1}{q}}\|T_n\|_{p,q}<4\left(\frac{1}{2}+\frac{1}{2^p-1}\right)^{\frac{1}{p}},    
\end{equation}
holds for 
\begin{itemize}
    \item $p=1, 1\leq n\leq7$
    \item $\mu\leq p$
    \item $1<p<\mu, n\leq N_{1,p}, (N_{1,p}\geq 7)$
    \item $1<p<\mu, N_{1,p}<n\leq N_{2, p}, 1\leq q<\eta_{p,n}$.
\end{itemize} The opposite inequality holds for other values of $n, p, q$. Here $q=\eta_{p,n}$ is the unique root of the following
$$n^{-\frac{1}{q}}\|T_{n}\|_{p,q}= 4\left(\frac{1}{2}+\frac{1}{2^p-1}\right)^{\frac{1}{p}},$$
for fixed $p~(1<p<\mu)$ and $n~(N_{1,p}<n\leq N_{2,p})$.
\end{theorem}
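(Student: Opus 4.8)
The plan is to run everything through the power mean inequality and the continuity of the power mean in its exponent (Lemma 1). Fix $1\le q\le p\le\infty$, write $c_j=\bigl(\sum_{i=1}^n|a_{ij}|^p\bigr)^{1/p}$ for the $\ell_p$ norms of the columns of $T_n$, and set $R(p)=4\bigl(\tfrac12+\tfrac1{2^p-1}\bigr)^{1/p}$, the right-hand side of (\ref{mainthm2eq0}). Just as in the proof of Theorem \ref{mainthm1} one gets
$$n^{-1}\|T_n\|_{p,1}=M_1(\underline{c})\le M_q(\underline{c})=n^{-\frac1q}\|T_n\|_{p,q}\le M_p(\underline{c})=n^{-\frac1p}\|T_n\|_p,$$
and I would note that for $n\ge2$ the $c_j$ are not all equal — the second column has strictly larger $\ell_p$ norm than the first, the difference of the $p$th powers being $2^p\bigl(1-(2n-1)^{-p}\bigr)>0$ — so both power mean inequalities are then strict. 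Thus the whole problem reduces to locating $R(p)$ relative to the two endpoints $n^{-1/p}\|T_n\|_p$ and $n^{-1}\|T_n\|_{p,1}$, and to using monotonicity of $M_q(\underline{c})$ in $q$ in between.

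For $\mu\le p$ I would argue there are no exceptions. If $p=\infty$ then $n^{-1/q}\|T_n\|_{\infty,q}\equiv2<4=R(\infty)$. For finite $p\ge\mu$, Theorem \ref{thm1} gives that $n^{-1/p}\|T_n\|_p$ is strictly increasing with limit $2^{1/p}[(2^p-1)\zeta(p)]^{1/p}$, and Lemma \ref{lem3} (its $p\ge\mu$ half) shows this limit is $\le R(p)$; hence $n^{-1/q}\|T_n\|_{p,q}\le n^{-1/p}\|T_n\|_p<R(p)$ for all $n$ and all admissible $q$.

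For $1<p<\mu$, Lemma \ref{lem3} instead gives $2^{1/p}[(2^p-1)\zeta(p)]^{1/p}>R(p)$, so both the strictly increasing sequence $n^{-1/p}\|T_n\|_p$ (Theorem \ref{thm1}) and the increasing sequence $n^{-1}\|T_n\|_{p,1}$ (Lemma \ref{lem5}), which tend to the same limit by Lemma \ref{lem5}, eventually exceed $R(p)$; I would take $N_{1,p}$ and $N_{2,p}$ to be their first crossing indices (Remarks \ref{rem1}, \ref{rem2}), noting $N_{1,p}\le N_{2,p}$ since $M_1(\underline{c})\le M_p(\underline{c})$ for each $n$. Then: if $n\le N_{1,p}$, the chain $n^{-1/q}\|T_n\|_{p,q}\le n^{-1/p}\|T_n\|_p\le N_{1,p}^{-1/p}\|T_{N_{1,p}}\|_p\le R(p)$ has a strict link (strict monotonicity of the $\ell_p$ sequence if $n<N_{1,p}$, the strict power mean step if $n=N_{1,p}$, which is $\ge2$), so (\ref{mainthm2eq0}) holds; if $n>N_{2,p}$, then $n^{-1/q}\|T_n\|_{p,q}\ge n^{-1}\|T_n\|_{p,1}\ge R(p)$, so the reverse inequality holds. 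The remaining, and crux, range is $N_{1,p}<n\le N_{2,p}$: here I would fix $n$ and regard $\varphi(q)=n^{-1/q}\|T_n\|_{p,q}=M_q(\underline{c})$ as a function on $[1,p]$, which by Lemma 1 is continuous and (the $c_j$ not all equal) strictly increasing with $\varphi(1)\le R(p)\le\varphi(p)$; therefore $\varphi$ hits the level $R(p)$ at a single point $\eta_{p,n}\in[1,p]$, with $\varphi(q)<R(p)$ for $q<\eta_{p,n}$ and $\varphi(q)\ge R(p)$ for $q\ge\eta_{p,n}$ — precisely the claimed dichotomy, with $\eta_{p,n}$ characterized by $n^{-1/q}\|T_n\|_{p,q}=R(p)$.

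Finally, for $p=1$ one has $q=1$, $\|T_n\|_{1,1}=\|T_n\|_1$, $R(1)=6$, and from (\ref{norm}), $\|T_n\|_1=2\sum_{k=1}^n\frac{2n-2k+1}{2k-1}$; I would just evaluate this for $n=1,\dots,8$ to see $n^{-1}\|T_n\|_1<6$ for $1\le n\le7$ and $n^{-1}\|T_8\|_1>6$, then invoke Theorem \ref{thm1} (with $p=1$) to propagate $n^{-1}\|T_n\|_1\ge6$ to all $n\ge8$. Since all the analytic input is already packaged in the lemmas, I expect the only real difficulty to be organizational: tracking strictness carefully at the crossing indices $N_{1,p},N_{2,p}$ so that the "$<$" and "$\ge$" cases are exhaustive, plus the small numerical checks behind the statement (that $N_{1,p}\ge7$ for $1<p<\mu$, as in Table 1, and the values of $n^{-1}\|T_n\|_1$ for small $n$).
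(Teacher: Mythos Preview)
Your approach is essentially the same as the paper's: the power mean sandwich $n^{-1}\|T_n\|_{p,1}\le n^{-1/q}\|T_n\|_{p,q}\le n^{-1/p}\|T_n\|_p$, together with Theorem~\ref{thm1}, Lemma~\ref{lem3}, Lemma~\ref{lem5} and Remarks~\ref{rem1}--\ref{rem2} to locate $R(p)$ relative to the endpoints, and then continuity/monotonicity of $M_q$ to find $\eta_{p,n}$ in the intermediate range. Your extra care with strictness (observing the $c_j$ are not all equal for $n\ge2$) is a welcome refinement that the paper glosses over.

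There is, however, one point you underestimate. You describe the claim $N_{1,p}\ge7$ for $1<p<\mu$ as a ``small numerical check'' backed by Table~1, but $p$ ranges over a continuum here, so no finite list of values can settle it. What must actually be shown is the inequality $7^{-1/p}\|T_7\|_p<R(p)$ for \emph{all} $p\in(1,\mu)$, equivalently
\[
7\cdot 2^{p-1}+\frac{7\cdot 2^p}{2^p-1}-13-\frac{11}{3^p}-\frac{9}{5^p}-\frac{1}{7^{p-1}}-\frac{5}{9^p}-\frac{3}{11^p}-\frac{1}{13^p}>0,
\]
and the paper devotes a page to proving this analytically (bounding the derivative from below to show the left side is increasing, then evaluating at $p=1$). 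This is not hard, but it is genuine work rather than a table lookup, and it is part of the theorem statement, so your plan should account for it.
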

\begin{proof} By the power mean inequality, we have
\begin{equation}\label{mainthm2eq1}
n^{-1}\|T_n\|_{p,1}\leq n^{-\frac{1}{q}}\|T_n\|_{p,q}\leq n^{-\frac{1}{p}}\|T_n\|_{p}.
\end{equation}
To prove our result, we consider four cases.

{\bf Case $\boldsymbol{(p=1)}$}: We have 
$$  \lim_{n\to\infty}n^{-1}\|T_n\|_{1,1}=\lim_{n\to\infty}n^{-1}\|T_n\|_{1}=\lim_{n\to\infty}\left[2\left(\sum_{k=1}^n\frac{1}{2k-1}\right)-1\right]=\infty.
$$
Hence, by the inequality (\ref{lem5eq1}) and simple computation, we get
$$n^{-1}\|T_n\|_{1,1}<6$$
for $n\leq7$, and
$$n^{-1}\|T_n\|_{1,1}>6$$
for $n\geq8$.

{\bf Case $\boldsymbol{(\mu\leq p)}$}: By inequality (\ref{mainthm2eq1}) and Theorem \ref{thm1}, we have
$$n^{-\frac{1}{q}}\|T_n\|_{p,q}\leq n^{-\frac{1}{p}}\|T_n\|_{p}<2^{\frac{1}{p}}\left[(2^p-1)\zeta(p)\right]^{\frac{1}{p}}.$$
If we show that 
$$2^{\frac{1}{p}}\left[(2^p-1)\zeta(p)\right]^{\frac{1}{p}}\leq4\left(\frac{1}{2}+\frac{1}{2^p-1}\right)^{\frac{1}{p}},$$
then the desired result follows. This inequality is equivalent to
$$\left(1-\frac{1}{2^p}\right)\zeta(p)\leq 2^{p-1}\left(\frac{1}{2}+\frac{1}{2^p-1}\right),$$
which is true by the Lemma \ref{lem3}.

{\bf Case $\boldsymbol{(1< p< \mu, n\leq N_{1,p})}$}: By inequality (\ref{mainthm2eq1}), Theorem \ref{thm1} and Remark \ref{rem1}, we have
 $$n^{-\frac{1}{q}}\|T_n\|_{p,q}\leq n^{-\frac{1}{p}}\|T_n\|_{p}\leq N_{1,p}^{-\frac{1}{p}}\|T_{N_{1,p}}\|_{p}\leq4\left(\frac{1}{2}+\frac{1}{2^p-1}\right)^{\frac{1}{p}}.$$ 
Now, we will prove that $N_{1,p}\geq7$. By inequality (\ref{mainthm2eq1}) and Theorem \ref{thm1}, we have
$$n^{-\frac{1}{q}}\|T_n\|_{p,q}\leq n^{-\frac{1}{p}}\|T_n\|_{p}\leq7^{-\frac{1}{p}}\|T_7\|_{p}.$$
Hence it is enough to prove that
$$7^{-\frac{1}{p}}\|T_7\|_{p}<4\left(\frac{1}{2}+\frac{1}{2^p-1}\right)^{\frac{1}{p}}.$$
This inequality equivalent to
$$7\cdot 2^{p-1}+\frac{7\cdot 2^p}{2^p-1}-13-\frac{11}{3^p}-\frac{9}{5^p}-\frac{1}{7^{p-1}}-\frac{5}{9^p}-\frac{3}{11^p}-\frac{1}{13^p}>0.$$
Let 
$$\psi(p)=7\cdot 2^{p-1}+\frac{7\cdot 2^p}{2^p-1}-13-\frac{11}{3^p}-\frac{9}{5^p}-\frac{1}{7^{p-1}}-\frac{5}{9^p}-\frac{3}{11^p}-\frac{1}{13^p}.$$
Differentiation yields
$$\psi'(p)=7\cdot 2^{p-1}\ln 2-\frac{7\cdot 2^p\ln2}{(2^p-1)^2}+\frac{11\ln3}{3^p}+\frac{9\ln5}{5^p}+\frac{\ln7}{7^{p-1}}+\frac{5\ln9}{9^p}+\frac{3\ln11}{11^p}+\frac{\ln13}{13^p}.$$
Using an obvious inequalities $\ln3>\ln2$, $\ln 5>2\ln2$, $\ln 7>2\ln2$ and $2^{p-1}<2^p-1$ lead to
\begin{align*}
\psi'(p)&>7\cdot 2^{p-1}\ln 2-\frac{7\cdot 2^p\ln2}{(2^p-1)^2}+\frac{11\ln3}{3^p}+\frac{9\ln5}{5^p}+\frac{\ln7}{7^{p-1}}\\
&>7\cdot 2^{p-1}\ln 2-\frac{7\cdot 2^p\ln2}{(2^p-1)^2}+\frac{11\ln2}{3^p}+\frac{18\ln2}{5^p}+\frac{2\ln2}{7^{p-1}}\\
&>\left(7\cdot 2^{p-1}-\frac{7}{2^{p-2}}+\frac{11}{3^p}+\frac{18}{5^p}+\frac{2}{7^{p-1}}\right)\ln2\\
&>\left(7\cdot 2^{p-1}-14+\frac{11}{3^p}+\frac{18}{5^p}+\frac{2}{7^{p-1}}\right)\ln2:=\psi_1(p)\ln2.
\end{align*}
Next, we show that $\psi_1(p)>0$ for $1\leq p\leq \mu$. Differentiation yields
$$\psi'_1(p)=7\cdot 2^{p-1}\ln2-\frac{11\ln3}{3^p}-\frac{18\ln5}{5^p}-\frac{2\ln7}{7^{p-1}},$$
$$\psi''_1(p)=7\cdot 2^{p-1}(\ln2)^2+\frac{11(\ln3)^2}{3^p}+\frac{18(\ln5)^2}{5^p}+\frac{2(\ln7)^2}{7^{p-1}}>0.$$
This reveals that for $1\leq p\leq \mu$,
$$\psi_1(p)\geq \psi_1(1.5)+\psi'_1(1.5)(p-1.5)\approx 0.38+0.47(p-1.5)>0.$$
Thus, $\psi(p)$ is strictly increasing on $[1,\mu]$. Therefore, we get
$$\psi(p)>\psi(1)=\frac{4042}{6435}>0,$$
 for $1< p< \mu$.

{\bf Case $\boldsymbol{(1< p\leq \nu, n>N_{1,p})}$}: By inequality (\ref{mainthm2eq1}), we get $N_{2, p}\geq N_{1,p}$. We consider two sub-cases.

Case ($N_{1,p}<n\leq N_{2, p}$): By the Remark \ref{rem1} and Remark \ref{rem2}, we have
$$n^{-1}\|T_{n}\|_{p,1}\leq 4\left(\frac{1}{2}+\frac{1}{2^p-1}\right)^{\frac{1}{p}}\leq n^{-\frac{1}{p}}\|T_{n}\|_{p}.$$
From the inequality (\ref{mainthm2eq1}) and continuity of the power means, there is unique number $\eta_{p,n}$ such that $1<\eta_{p,n}<p$ and 
$$n^{-\frac{1}{\eta_{p,n}}}\|T_{n}\|_{p,\eta_{p,n}}= 4\left(\frac{1}{2}+\frac{1}{2^p-1}\right)^{\frac{1}{p}},$$
for fixed $p$. Thus, by monotonicity of power means
$$n^{-\frac{1}{q}}\|T_{n}\|_{p,q}<4\left(\frac{1}{2}+\frac{1}{2^p-1}\right)^{\frac{1}{p}},$$
for $q<\eta_{p,n}$ and
$$n^{-\frac{1}{q}}\|T_{n}\|_{p,q}\geq4\left(\frac{1}{2}+\frac{1}{2^p-1}\right)^{\frac{1}{p}},$$
for $\eta_{p,n}\leq q\leq p$.

Case ($N_{2, p}<n$): By the inequality (\ref{mainthm2eq1}), Lemma \ref{lem5} and Remark \ref{rem2}, we have
$$n^{-\frac{1}{q}}\|T_{n}\|_{p,q}\geq n^{-1}\|T_n\|_{p,1}\geq (N_{2,p}+1)^{-1}\|T_{N_{2,p}+1}\|_{p,1}\geq4\left(\frac{1}{2}+\frac{1}{2^p-1}\right)^{\frac{1}{p}}.$$
The proof is complete.
\end{proof}
\begin{remark}
    In Table 3, we give some values of $\eta_{p, n}$.
\begin{table}[ht!]
\centering
\begin{tabular}{ |c|c|c|c|c|c| } 
\hline
$p$ & $1.51$ & $1.52$ & $1.53$ & $1.54$ & $1.54$ \\
\hline
$n$ & $50$ & $57$ & $66$ & $77$ & $78$ \\
\hline
$\eta_{p, n}$& $1.2369...$ & $1.1900...$ & $1.1448...$ & $1.4360...$ & $1.0739...$ \\
\hline\hline
$p$ &  $1.55$ & $1.55$ & $1.56$ & $1.56$ & $1.56$\\
\hline
$n$  & $93$ & $94$ & $116$ & $117$ & $118$ \\
\hline
$\eta_{p, n}$ & $1.4701..$ & $1.1557...$ & $1.5451...$ & $1.2763...$ & $1.0216...$ \\
\hline
\end{tabular}
\caption{}
\label{table:3}
\end{table}
\end{remark}

\end{document}